\documentclass[12pt]{amsart}
\usepackage{graphicx}


\frenchspacing

\textwidth=13.5cm
\textheight=23cm
\parindent=16pt
\oddsidemargin=-0.5cm
\evensidemargin=-0.5cm
\topmargin=-0.5cm


\setcounter{secnumdepth}{5}

\usepackage{upref,amsxtra,amssymb,amscd}
\usepackage{varioref}
\usepackage{verbatim}
\usepackage{epsfig}
\usepackage{color}
\usepackage{mathrsfs}

\usepackage{latexsym}
\usepackage{epic,eepic,eucal}
\usepackage{enumerate}

\def\mE{					\mathcal E}

\def\t{           \theta}

\def\a{         \alpha}
\def\b{         \beta}
\def\g{         \gamma}

\newcommand{\NN}{{\mathbb N}}
\newcommand{\RR}{{\mathbb R}}

\newtheorem{theo}{\sc Theorem}[section]

\newtheorem{lemm}[theo]{\sc Lemma}
\newtheorem{coro}[theo]{\sc Corollary}

\theoremstyle{definition}

\theoremstyle{remark}

\newtheorem{rema}[theo]{\sc Remark}

\numberwithin{equation}{section}

\begin{document}





\title[Bounded L{\"u}roth expansions and Schmidt games] {Bounded L{\"u}roth expansions: applying Schmidt games where infinite distortion exists}

\author[B. Mance]{Bill Mance}
\author[J. Tseng]{Jimmy Tseng}
\address{B. M.: Department of Mathematics, Ohio State University, Columbus, OH 43210, USA}
\email{mance.8@osu.edu}
\address{J. T.: Department of Mathematics, University of Illinois at Urbana-Champaign, 61801, USA}
\email{tsengj@illinois.edu}

\begin{abstract}
We show that the set of numbers with bounded L{\"u}roth expansions (or bounded L{\"u}roth series) is winning and strong winning.  From either winning property, it immediately follows that the set is dense, has full Hausdorff dimension, and satisfies a countable intersection property.  Our result matches the well-known analogous result for bounded continued fraction expansions or, equivalently, badly approximable numbers. 

We note that L{\"u}roth expansions have a countably infinite Markov partition, which leads to the notion of infinite distortion (in the sense of Markov partitions).
\end{abstract}

\subjclass[2010]{Primary 11K55; Secondary 11K50}
\date{}
\keywords{Bounded L{\"u}roth expansions, badly approximable numbers, Schmidt games, winning, full Hausdorff dimension, countably infinite Markov partitions}
\maketitle

\section{Introduction}\label{secIntro}

In the fields of number theory and dynamical systems, a type of largeness of a set, that of being winning in the sense of Schmidt games, has quickly become important because fundamental sets in number theory---badly approximable numbers~\cite{Sch2}, badly approximable matrices~\cite{Sch3}---and fundamental sets in dynamical systems---points with nondense orbits under $C^2$-expanding circle maps~\cite{T4}, points with nondense orbits under linear automorphisms of the torus~\cite{BFK}---exhibit the winning property while not exhibiting the usual notion of largeness, namely that of being full Lebesgue measure.  These sets are, moreover, null sets, but they do satisfy a countable intersection property and have full Hausdorff dimension, much like conull sets.  

The main technique in this paper, the use of the notion of commensurate, is an advance in the application of the technique of Schmidt games to cases of infinite distortion (in the sense of Markov partitions), whereas previously only bounded distortion could be handled.\footnote{We defer the discussion of distortion to Section~\ref{secConcl}, especially Section~\ref{secInfVsFinDistort2}.}  The technique in this paper, which handles this new, extra source of infinity, is a significant extension of the second-named author's work in~\cite{T4} for some bounded distortion cases.  We will borrow notation and ideas from~\cite{T4}.  Moreover, we intend our proof in this paper, which shows that the set of bounded L{\"u}roth expansions is winning, to be a model for other cases which have infinite distortion, and such cases are plentiful and natural in number theory and dynamical systems.\footnote{Irrational numbers with bounded continued fraction expansions or, equivalently, bounded Gauss map expansions are already known to be winning.  The two known proofs rely on the fact that the set of these expansions is exactly the set of badly approximable numbers.  This equivalence is not available for L{\"u}roth expansions, and, therefore, the proof given in this paper is the only known proof that the set of bounded L{\"u}roth expansions is winning.  See Section~\ref{subsecFA} for more details.}

\subsection{Introduction to L{\"u}roth expansions} Perhaps the simplest example of a case with infinite distortion is that of L{\"u}roth expansions.  These expansions are variants of the well-known continued fraction expansions and $n$-ary expansions where $n>1$ is an integer.  Like for these other expansions, every real number $x \in X := [0,1] / 0 \sim 1$ can be written as a finite or infinite series, called the \textit{L{\"u}roth expansion} or, equivalently, \textit{L{\"u}roth series} of $x$: \begin{align*}x = \frac 1 {a_1(x)} & +  \frac 1 {a_1(x) (a_1(x) -1) a_2(x)} + \cdots \\ & + \frac 1 {a_1(x) (a_1(x) -1) \cdots a_{n-1}(x)(a_{n-1}(x)-1)a_n(x)}+\cdots\end{align*} where the natural number $a_k(x) \geq 2$ denotes the \textit{$k$-th digit} for each integer $k \geq 1$. Also like for these other expansions, the digits of the L{\"u}roth expansion for $x$ are given by a dynamical system, namely $T:X \rightarrow X$ where \begin{equation}\label{eqnLurothMap} Tx = \left\{\begin{array}{ll} n(n+1)x-n & \textrm{if } x \in [\frac 1{n+1}, \frac 1 n), \\
0 & \textrm{if } x=0.\end{array} \right. \end{equation} The first digit $a_1(x) = n+1$ if $x \in [\frac 1 {n+1}, \frac 1 {n})$ for $n \geq 1$, and the $k$-th digit is obtained by iterating the dynamical system: $a_k(x) = a_1(T^{k-1}x)$---when $T^{k-1} x =0$ for a $k \geq 1$, there is no digit and we stop the iteration, obtaining a finite expansion.\footnote{The number $0$ has no L{\"u}roth expansion according to our strict definition.}  A number $x$ has \textit{bounded L{\"u}roth expansion} if there exists a natural number $N(x)$ such that $a_k(x) \leq N(x)$ for all $k \geq 1$, and, in particular, finite L{\"u}roth expansions are bounded.  We discuss properties of L{\"u}roth expansions in Section~\ref{secPropLuroth}.  An introduction to L{\"u}roth expansions and related number-theoretical objects can be found in a number of references, including a short monograph~\cite{DK} by K.~Dajani and C.~Kraaikamp or another short monograph~\cite{Ga} by J.~Galambos.

\section{Statement of results}

As a model proof for applying Schmidt games to cases of infinite distortion, we show\footnote{Schmidt games and winning and strong winning sets are briefly discussed in Section~\ref{secSchmidtGames}; distortion, in Section~\ref{secInfVsFinDistort2}.}:

\begin{theo}\label{thmBndLurothWinning} The set of numbers with bounded L{\"u}roth expansion is $\a$-winning and $\a$-strong winning for $\a := \frac 1 8$.
\end{theo}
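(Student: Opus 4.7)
The plan is to construct an explicit strategy for Alice in the Schmidt $(\alpha, \beta)$-game (and its strong variant) with $\alpha = 1/8$, forcing the intersection point of the nested balls to have all L\"uroth digits bounded. The central device is the notion of a \emph{commensurate cylinder}: for each round $n$, given Bob's ball $B_n$ of radius $\rho_n$, I choose an integer $k_n$ so that the level-$k_n$ L\"uroth cylinder $C_n$ containing $B_n$ has length comparable to $\rho_n$, up to a fixed constant depending only on $\alpha$. The existence of such a level rests on the observation that, given a level-$k$ cylinder $C$ containing $B_n$, either (i) $B_n$ fits inside a single level-$(k+1)$ sub-cylinder, and we may descend to that sub-cylinder; or (ii) $B_n$ straddles the boundary between two level-$(k+1)$ sub-cylinders, in which case $\rho_n$ cannot be too much smaller than $|C|$, and $k_n = k$ is the commensurate level.

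Within $C_n$ the map $T^{k_n}$ is affine and increasing, taking $C_n$ bijectively onto $[0,1)$, so the level-$(k_n{+}1)$ sub-cylinders of $C_n$, indexed by the next digit $a_{k_n+1} = m \geq 2$ with relative length $1/(m(m-1))$, tile $C_n$ and accumulate at its left endpoint. The ``dangerous'' region where $a_{k_n+1} \geq N$ is therefore a sub-interval of length $|C_n|/(N-1)$ abutting that endpoint, using the telescoping identity $\sum_{m \geq N} 1/(m(m-1)) = 1/(N-1)$. Alice's move is to place her ball $A_n \subseteq B_n$ of radius $\alpha \rho_n$ entirely to the right of this dangerous region, which forces $a_{k_n+1} \leq N$ for all subsequent plays. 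A direct interval-length calculation shows that with $\alpha = 1/8$ and a single threshold $N$ depending only on the commensurability constant, such a placement is always possible regardless of where inside $C_n$ the ball $B_n$ sits. The construction is compatible with the strong-winning rules because Alice's ball has exactly the prescribed radius.

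Iterating the strategy, the cylinders $C_n$ are nested, $\rho_n \to 0$, and hence $k_n \to \infty$; every digit of the limit point $x = \bigcap_n B_n$ is thus bounded by $N$ from some round onward, so $x$ has bounded L\"uroth expansion. The step I expect to be hardest is making the commensurate-level construction robust in the straddling case, particularly when Bob aligns $B_n$ across the boundary between a narrow high-digit sub-cylinder and a much wider low-digit one: this is where the infinite distortion of the L\"uroth Markov partition truly bites, distinguishing the situation from the finite-partition cases treated in~\cite{T4}. The uniform threshold $N$ has to be chosen to cope with this scenario, and its interaction with the commensurability constant and Alice's fixed contraction factor is ultimately what pins down the concrete value $\alpha = 1/8$.
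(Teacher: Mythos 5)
There is a genuine gap, and it sits exactly where you suspected. The commensurate-level construction you describe does not exist in general: if Bob's ball $B_n$ straddles the common endpoint of the sub-cylinders with next digit $m$ and $m+1$ inside a level-$k$ cylinder $C$, then $\rho_n \le \tfrac{2}{m(m-1)}|C|$, which for large $m$ is arbitrarily small relative to $|C|$; so there need be \emph{no} level whose containing cylinder has length comparable to $\rho_n$. Worse, $B_n$ may contain the \emph{left endpoint} of $C$ (the accumulation point), in which case it meets infinitely many level-$(k+1)$ sub-cylinders, not two. This is precisely the infinite-distortion phenomenon, and it is why the paper defines ``commensurate with generation $n$'' by the reverse containment ($B$ contains some generation-$n$ LE but no generation-$(n-1)$ LE), which always exists and is unique---while still warning that such a ball can be arbitrarily small yet commensurate with a small generation.

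The second, more fatal consequence for your strategy: the dangerous region of $C$ (next digit $\ge N$) is the interval of length $\tfrac{1}{N-1}|C|$ abutting the accumulation point of $C$. If Bob steers his balls toward that point, then by the time his ball is small it can lie \emph{entirely inside} the dangerous region of $C$, and Alice cannot place $A_n$ to its right; the digit following generation $k$ is then already forced to be huge. So the claim that ``such a placement is always possible regardless of where inside $C_n$ the ball $B_n$ sits'' is false for an arbitrary $B_n$. The entire content of the paper's argument is the inductive mechanism that prevents this: a three-case analysis of where Bob's ball sits relative to the at most one accumulation point of lower generations it can contain (Corollary~\ref{coroAtMost1AccPoint}), with Alice's moves chosen so that at each transition to a deeper commensurate generation the new ball is provably larger than any dangerous interval it meets (Lemma~\ref{lemmBallBiggerHoles}) and automatically disjoint from the dangerous intervals of all intermediate generations (Corollary~\ref{coroDisjointOnLowerGens}). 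Your digit-counting via $\sum_{m\ge N} \tfrac{1}{m(m-1)} = \tfrac{1}{N-1}$ matches Lemma~\ref{lemmSizeofLE} and the overall target set is right, but without the positional induction the strategy does not close.
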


\noindent Let $T$ be the dynamical system in (\ref{eqnLurothMap}) that gives the digits of the L{\"u}roth expansion.  The theorem is equivalent (in a straightforward manner) to the following dynamical reinterpretation:

\begin{coro}
The set of points in $X$ whose forward orbits under $T$ miss an open interval with left endpoint $0$ is $\a$-winning and $\a$-strong winning for $\a := \frac 1 8$.
\end{coro}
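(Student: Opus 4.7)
The plan is to show that the two sets described in Theorem~\ref{thmBndLurothWinning} and in the corollary are literally the same subset of $X$; the corollary will then be immediate from the theorem. No new Schmidt-game analysis is needed; only an unfolding of the definitions relating the digits $a_k(x)$ to the position of $T^{k-1}x$ in the partition of $X$.

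The first step is to translate the bound ``$a_k(x)\le N$'' into a condition on the orbit. From the definition, $a_1(y)=n+1$ exactly when $y\in[\frac{1}{n+1},\frac{1}{n})$, so $a_1(y)\le N$ if and only if $y\in[\frac{1}{N},1)$. Iterating via $a_k(x)=a_1(T^{k-1}x)$, the inequality $a_k(x)\le N$ for every $k$ at which a digit exists is equivalent to $T^{k-1}x\in[\frac{1}{N},1)$ for every such $k$. Taking the convention that the orbit of a finite expansion continues as $0$ once it lands on $0$, this condition is exactly that the forward orbit of $x$ under $T$ is contained in $\{0\}\cup[\frac{1}{N},1)$, i.e.\ misses the open interval $(0,\frac{1}{N})$.

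The second step is to pass from a fixed $N$ to an existential statement. A number $x$ has bounded L{\"u}roth expansion if and only if there is some $N\ge 2$ with $a_k(x)\le N$ for all $k$, which by the previous step is equivalent to the existence of some $N\ge 2$ such that the forward $T$-orbit of $x$ misses $(0,\frac{1}{N})$. Since every open interval with left endpoint $0$ contains some $(0,\frac{1}{N})$ and is itself contained in $(0,1)=\bigcup_N (0,\frac{1}{N})$, this in turn is equivalent to the forward orbit missing some open interval with left endpoint $0$. Hence the two sets coincide, and Theorem~\ref{thmBndLurothWinning} applies.

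The only point requiring care is the treatment of finite L{\"u}roth expansions, whose orbits eventually reach the fixed point $0$. These are declared to be bounded by convention, and $0\notin(0,\frac{1}{N})$, so such orbits do miss $(0,\frac{1}{N})$ for every $N\ge 2$; hence no inconsistency arises between the two formulations. Because the equivalence is purely definitional, there is no substantive obstacle to this proof---it is a bookkeeping verification rather than a dynamical argument.
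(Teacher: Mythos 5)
Your proof is correct and is exactly the ``straightforward'' definitional equivalence the paper invokes without writing out: bounded digits $a_k(x)\le N$ correspond precisely to the forward orbit avoiding $(0,\frac1N)$, and the existential quantifier over $N$ matches the existential quantifier over open intervals with left endpoint $0$. The only detail left implicit is that the point $0$ itself (which has no L\"uroth expansion under the paper's strict convention, yet whose orbit trivially misses every such interval) lies in the orbit set but not in the bounded-expansion set; this is harmless because a superset of an $\alpha$-(strong) winning set is again $\alpha$-(strong) winning.
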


\noindent Now the intersection of any other (strong) winning set in $\RR$ (or $X$) with the set of bounded L{\"u}roth expansions is also (strong) winning.  Using results from~\cite{Mc, BFK}, one possible number-theoretic corollary is

\begin{coro}
The set of badly approximable numbers with bounded L{\"u}roth expansion and bounded block $n$-ary expansion for every natural number $n >1$ is winning and strong winning and thus is a dense set of full Hausdorff dimension.
\end{coro}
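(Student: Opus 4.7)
The plan is to combine Theorem~\ref{thmBndLurothWinning} with two known strong-winning results and the countable intersection property for strong winning sets. Since strong winning implies winning, and winning subsets of $\RR$ (equivalently of $X$) are automatically dense and have full Hausdorff dimension by Schmidt's classical theorems, the only substantive thing I need to establish is that the intersection appearing in the corollary is strong winning.

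I would first collect three ingredients: (i) by Theorem~\ref{thmBndLurothWinning}, the set $L$ of numbers with bounded L\"uroth expansion is strong winning; (ii) by McMullen~\cite{Mc}, the set $\Ba$ of badly approximable real numbers is strong winning (a strengthening of Schmidt's original winning theorem for $\Ba$); (iii) by Broderick--Fishman--Kleinbock~\cite{BFK}, for each integer $n>1$ the set $E_n$ of numbers with bounded block $n$-ary expansion is strong winning, since such expansions correspond to forward orbits under the $\times n$ endomorphism of $X$ that avoid a suitable cylinder, which is precisely the type of nondense-orbit set handled in~\cite{BFK}.

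Next, I would invoke the countable intersection property for strong winning sets, applied to the countable family $\{\Ba, L\} \cup \{E_n\}_{n\ge 2}$, to conclude that
\[
\Ba \,\cap\, L \,\cap\, \bigcap_{n\ge 2} E_n
\]
is strong winning, and hence winning. The density and full Hausdorff dimension conclusions of the corollary then follow automatically from the standard consequences of winning in $\RR$.

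The main subtlety to manage is the bookkeeping of the $\alpha$-parameters. Schmidt's classical $\alpha$-winning countable intersection theorem requires a common value of $\alpha$, which would force uniform lower bounds on the parameters coming from each of~\cite{Mc},~\cite{BFK}, and Theorem~\ref{thmBndLurothWinning}---in particular a uniform positive lower bound on the $E_n$-parameters as $n\to\infty$, which is not explicit in the cited sources. McMullen's strong winning framework sidesteps this obstacle because strong winning is closed under countable intersection without any uniformity assumption, and this is the main reason I would route the argument through the strong winning conclusion of Theorem~\ref{thmBndLurothWinning} rather than through its $\alpha = \tfrac{1}{8}$ winning conclusion alone.
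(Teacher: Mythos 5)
Your high-level plan---intersect the strong winning sets coming from Theorem~\ref{thmBndLurothWinning}, \cite{Mc}, and \cite{BFK}---is the argument the paper has in mind, and the first three paragraphs are fine. But the closing paragraph contains an error: McMullen's countable intersection theorem for strong winning sets also requires a common $\a$, exactly as Schmidt's does for ordinary winning, so strong winning does \emph{not} by itself ``sidestep'' the uniformity issue you raised. Moreover, the available monotonicity runs the wrong way for your purposes (an $\a$-strong winning set is $\a'$-strong winning only for $\a' \le \a$), so if the parameters $\a_n$ one extracted from \cite{BFK} for the sets $E_n$ were permitted to tend to $0$, the countable intersection argument would collapse just as it would in the classical game.

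What actually supplies the missing uniformity is a stronger property established in the cited sources: McMullen proves that $\Ba$ is \emph{absolute winning}, and \cite{BFK} prove that the relevant nondense-orbit sets under $x \mapsto nx \bmod 1$ are hyperplane absolute winning, which on $\RR$ coincides with absolute winning. Absolute winning carries no $\a$-parameter, is preserved under countable intersections, and implies $\a$-strong winning for all sufficiently small $\a$ (in particular for $\a = \frac 1 8$). Hence $\Ba \cap \bigcap_{n\ge 2} E_n$ is absolute winning, therefore $\frac 1 8$-strong winning, and the (now finite) intersection with the $\frac 1 8$-strong winning set of Theorem~\ref{thmBndLurothWinning} yields the corollary. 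Your instinct that one should route through a framework that absorbs the parameter bookkeeping is right; that framework is the absolute game, one level above the strong game.
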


\noindent A real number $x$ has \textit{bounded block-$k$} $n$-ary expansion if there exists an $N(x)>0$ such that every block of consecutive digits $k$ has at most $N(x)$ digits, and the number $x$ has \textit{bounded block} $n$-ary expansion if it has bounded block-$k$ $n$-ary expansion for all digits $k$.  There is an obvious dynamical reinterpretation of the corollary, an interpretation which allows us to replace the number-theoretical concept of numbers having bounded block $n$-ary expansion with the natural (and more general) dynamical concept of numbers whose forward orbits (under $x \mapsto n x \mod 1$) miss some neighborhood of a prescribed point. 

Our Theorem~\ref{thmBndLurothWinning} considerably strengthens the result of~\cite{SLZ2} (on the Hausdorff dimension of bounded L{\"u}roth expansions) to the winning properties and allows us to obtain the above corollaries, which are not obtainable just from knowing the Hausdorff dimension.  Furthermore, since the winning property is preserved by local isometries (see~\cite{Sch2}) and the strong winning property is preserved by quasisymmetric homeomorphisms (see~\cite{Mc}), these properties allow us to write corollaries similar to Corollary~1.2 of~\cite{T2}, corollaries which again are not obtainable just from knowing dimension.

\section{Properties of L{\"u}roth expansions}\label{secPropLuroth}

The elementary properties of L{\"u}roth expansions, we take from Chapter 2 of the monograph~\cite{DK}.  As typical with expansions such as these, we are only concerned with the numbers from the circle $X := [0,1] / 0 \sim 1$ because these expansions are defined modulo $1$.  In this section, a very important way of dealing with L{\"u}roth expansions, the key concept of commensurate, will be introduced.

Let us first, however, introduce some notation.  The absolute value of a set denotes the usual length or, equivalently, the probability Haar measure on $X$.  The \textit{L{\"u}roth element (LE) of generation $0$} is $X$.  For $n \in \NN$, \textit{a L{\"u}roth element (LE) of generation $n$} is a closed interval corresponding to the first $n$ digits in a L{\"u}roth expansion---this is the left-closed, right-open interval corresponding to all L{\"u}roth expansions beginning with the given $n$ digits union the right endpoint of this interval.  We refer to these $n$ digits as the L{\"u}roth expansion corresponding to this LE.  For $n \in \NN \cup \{0\}$, let $G_n$ denote the set of LEs of generation $n$. (Hence, $G_1 := \{[1/2, 1], [1/3, 1/2], \cdots, [1/k+1, 1/k], \cdots\}$.)

Given these notions of L{\"u}roth elements and generations, we observe the following properties:

\begin{enumerate}
	\item Let $n \in \NN$. Every element $E \in G_n$ has a unique left-adjacent element in $G_n$.  We denote this element by $E^-$.
	\item Let $n \in \NN$.  Every element $E \in G_n$ that does not correspond to a L{\"u}roth expansion ending in the digit $2$ has a unique right-adjacent element in $G_n$.  We denote this element by $E^+$.
	
	\item Given $n \in \NN \cup \{0\}$ and $E \in G_n$. Let $U$ be any open set containing the left endpoint of $E$. Then $U \cap E$ contains infinitely many elements of $G_{n+1}$.  
	\item Given $n \in \NN \cup \{0\}$ and $E \in G_n$.  The only point of $E$ satisfying the previous property is the left endpoint.
	\item The maximum over all diameters of elements in $G_n$ goes to zero as $n$ goes to infinity.
	\item Let $n \in \NN$.  For every $E \in G_n$ there exists an unique $F \in G_{n-1}$ such that $E \subset F$.
	\item Let $n \in \NN \cup \{0\}$. The interiors of any two distinct elements of $G_n$ are disjoint.

\end{enumerate}

Let $n \in \NN \cup \{0\}$ and $E \in G_n$ be chosen.  A point of $E$ is \textit{an accumulation point of $E$} if it satisfies the condition in Property (3).  Thus the left endpoint of any LE is an accumulation point of that element.  Property (4) says that it is the only one.  \textit{An accumulation point of generation $n$} is an accumulation point of some LE of generation $n$.  Note that the intersection of the sets of accumulation points of distinct generations is empty. Also note that the right endpoint of $E$ is an accumulation point of some generation less than or equal to $n$.

In the dynamical point of view, the accumulation points are the preimages of $0$ under $T$.  In particular, let $E$ be an LE of generation $n$.  Then the accumulation point belonging to $E$ is a preimage of $0$ under $T^n$, but not under $T^{n-1}$.  And every such preimage is an accumulation point for an LE of generation $n$.  

For us, a ball is assumed to have nonempty interior and it is usually assumed to be closed.  Let $n \in \NN$.  A closed ball $B$ is \textit{commensurate with generation $n$ (c.w.g $n$)} if $B$ completely contains an LE of generation $n$ but no LE of generation $n-1$.  Since every ball is path-connected, it follows that $B$ can contain at most one accumulation point of all generations up to (and including) $n-1$.

\begin{rema}
This notion of being commensurate---the essence of the proof of our main result---requires knowledge of both the length and position of $B$.  It is possible for $B$ to be arbitrarily small but commensurate with a small generation number.  Such $B$s must be avoided if we are to play the Schmidt game.
\end{rema}

The following lemma is easy to verify:

\begin{lemm}For every closed ball $B$ of $X$ that is also a proper subset of $X$, there exists a unique $n \in \NN$ such that $B$ is c.w.g. $n$.
\end{lemm}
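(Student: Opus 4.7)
The plan is to establish existence and uniqueness separately; both should follow directly from the structural Properties (3)--(6) above.

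For uniqueness, I would argue by contradiction: suppose $B$ is c.w.g.\ both $n$ and $n'$ with $n < n'$. Since $B$ is c.w.g.\ $n$, it contains some $E \in G_n$. By Property (6), every element of $G_m$ is contained in a unique element of $G_{m-1}$, so the generations refine one another; applied iteratively (with Property (3) guaranteeing that each $E \in G_m$ indeed contains members of $G_{m+1}$), this shows that $E$ itself contains members of $G_m$ for every $m \geq n$. In particular $B$ contains an element of $G_{n'-1}$, contradicting the c.w.g.\ $n'$ requirement that $B$ contain no LE of generation $n'-1$.

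For existence, the idea is to squeeze a sufficiently fine LE inside $B$ and then minimise the generation. The set of accumulation points of all generations --- exactly $\bigcup_{k \geq 0} T^{-k}(0)$ by the dynamical characterisation given above --- is countable, while the interior of $B$ is an uncountable open interval, so I can pick $x$ in the interior of $B$ that is not an accumulation point of any generation. For each $m \in \NN$ such an $x$ lies in a unique $E_m \in G_m$, namely the LE cut out by the first $m$ digits of its L{\"u}roth expansion, and, since it is not a boundary point, in the interior of $E_m$. Property (5) forces $|E_m| \to 0$, so for all sufficiently large $m$ we have $E_m \subseteq B$. Hence
\[ M := \{m \in \NN : B \supseteq E \text{ for some } E \in G_m\} \]
is nonempty; define $n := \min M$. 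Because $B$ is a proper subset of $X$, it cannot contain the sole element of $G_0$ (namely $X$ itself), so the same minimum taken over $\NN \cup \{0\}$ is still $n$; in particular $n \geq 1$, and by minimality $B$ contains no member of $G_{n-1}$. This is precisely the c.w.g.\ $n$ condition.

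The only subtle point is selecting $x$ away from all accumulation points so that its nested generation-$m$ LEs shrink cleanly to it; once that is done, everything reduces to routine bookkeeping with the listed properties, which is consistent with the lemma being flagged as easy to verify.
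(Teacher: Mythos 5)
The paper offers no proof here --- it simply declares the lemma ``easy to verify'' --- so your write-up is the argument to judge on its own merits. It is correct.

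Uniqueness: if $B$ is c.w.g.\ $n$ and $n'$ with $n < n'$, then $B$ contains some $E \in G_n$, and since each LE of generation $m$ contains elements of $G_{m+1}$ (Property (3)) and these nest (Property (6)), $E$ contains elements of $G_m$ for every $m \geq n$; in particular $B$ contains an element of $G_{n'-1}$, contradicting c.w.g.\ $n'$. That is airtight. Existence: the accumulation points across all generations are exactly $\bigcup_{k\geq 0} T^{-k}(0)$, a countable set, while the interior of $B$ (nonempty by the paper's ball convention) is uncountable, so a non-accumulation point $x$ can be chosen in the interior; it has an infinite L{\"u}roth expansion defining a unique nested chain $E_m \ni x$, Property (5) forces $|E_m|\to 0$, hence $E_m \subseteq B$ for large $m$, and taking $n := \min\{m : E \subseteq B \text{ for some } E \in G_m\}$ together with $B \subsetneq X$ handles both the $n\geq 1$ requirement and the ``no LE of generation $n-1$'' requirement. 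The one point worth making explicit is why $x$ not being a preimage of $0$ rules out its lying on the boundary of any $E_m$: both endpoints of any LE are accumulation points of some generation (the left endpoint by definition, the right endpoint as noted in the paper), so a non-accumulation point lies in the interior of the unique $E_m$ of each generation, which is exactly what you use. This is the natural route, and there is no paper argument to contrast it with.
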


\noindent The lemma has a corollary:
\begin{coro}\label{coroAtMost1AccPoint}
Any closed ball c.w.g. $n$ (and also properly contained in $X$) is properly contained in at most two elements of $G_{n-1}$.
\end{coro}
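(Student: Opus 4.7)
The plan is to exploit the connectedness of $B$ together with the interior-disjoint tiling of the circle $X$ by elements of $G_{n-1}$, which follows from Property~(7). Since $B$ is a closed ball properly contained in $X$, I can parameterize $B$ as a closed sub-interval $[a,b]$ of an arc of $X$ and use the usual linear order.

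The key step is a contradiction argument. Suppose three distinct elements $F_1, F_2, F_3 \in G_{n-1}$ each meet the interior of $B$. Using the interior-disjointness, I would order them along $B$ so that $F_2 = [p,q]$ lies strictly between $F_1$ and $F_3$. Picking interior intersection points $x_1 \in F_1 \cap B$ with $x_1 < p$ and $x_3 \in F_3 \cap B$ with $x_3 > q$, the connectedness of the arc $B$ forces $[x_1, x_3] \subseteq B$, and in particular $F_2 \subseteq B$. But $F_2$ is an LE of generation $n-1$, contradicting the hypothesis that $B$ is c.w.g.\ $n$. Therefore at most two elements of $G_{n-1}$ meet $B$ nontrivially, and $B$ is contained in their union $F_1 \cup F_2$; since $B$ contains no LE of generation $n-1$, this containment is proper.

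The main obstacle, I anticipate, is the careful treatment of accumulation points. Near an accumulation point of generation strictly less than $n-1$, infinitely many $G_{n-1}$ elements cluster from one side, raising the worry that $B$ might meet infinitely many of them; however, any nontrivial entry of $B$ into such a cluster would force $B$ to contain a full $G_{n-1}$ element, again contradicting c.w.g.\ $n$. The preceding remark -- that $B$ contains at most one accumulation point of generations $\leq n-1$ -- makes this case analysis transparent, confirming that the ordering argument above only needs to handle finitely many elements at a time.
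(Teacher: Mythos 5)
Your argument is essentially identical to the paper's: exploit the connectedness of the interval $B$ together with the interior-disjointness of elements of $G_{n-1}$ to conclude that if $B$ met three such elements, the middle one would lie wholly inside $B$, contradicting that $B$ is c.w.g.\ $n$. The closing paragraph on accumulation-point clustering is a reasonable sanity check but is not needed for the argument, since the three-element contradiction already forbids any larger intersection count.
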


\begin{proof}
The ball $B$ is an interval and thus path-connected.  Proper containment follows by the definition of commensurate.  If $B$ contains three elements of $G_{n-1}$, then pick an interior point from each of these elements.  One of these points is closest to the left endpoint of $B$ and another is closest to the right endpoint of $B$.  The third point must lie between the other two.  As these elements have pairwise disjoint interiors, every point of the third element (the one corresponding to the third point) lies in $B$, implying that $B$ is not c.w.g $n$, a contradiction.   
\end{proof}

Given an LE $E$, it is an element of some generation $k$ and thus corresponds uniquely to a L{\"u}roth expansion with digits $a_1, \cdots, a_k$.  Since our proof is an intricate and significant extension of the work of the second-named author in~\cite{T4}, we use some ideas and the notation from that paper:  let $R_{a_1 \cdots a_k}:=E$ and $a_1 \cdots a_k$ be referred to as a (finite) string in the letters $\NN \backslash \{1\}$.  For more on strings and the associated ideas from Markov partitions and symbolic dynamics, see Section~2 of~\cite{T4} and Section~\ref{secInfVsFinDistort} of this paper.  The following lemma is easy to verify using elementary properties of L{\"u}roth expansions:
\begin{lemm}\label{lemmSizeofLE}
Given an integer $b \geq 2$ and an LE $R_{a_1 \cdots a_k}$, we have \[\big|\bigcup_{a_{k+1}>b} R_{a_1 \cdots a_k a_{k+1}}\big| = \frac 1 b \big|R_{a_1 \cdots a_k}\big| .\]
\end{lemm}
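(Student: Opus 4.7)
The plan is to establish the explicit length formula for a Lüroth element $R_{a_1 \cdots a_k}$ and then evaluate the right-hand side by a telescoping sum. First, I would compute the length of a generation-$1$ LE directly: if $a_1 = n+1$, then $R_{a_1} = [1/(n+1), 1/n]$ has length $\tfrac{1}{a_1-1} - \tfrac{1}{a_1} = \tfrac{1}{a_1(a_1-1)}$. Next, I would note that the Lüroth map $T$, restricted to the first-generation LE $R_{a_1}$, is affine with slope $a_1(a_1-1)$ by (\ref{eqnLurothMap}), so it is a measure-preserving bijection onto $X$ after rescaling by $a_1(a_1-1)$. Since $R_{a_1 \cdots a_k}$ is exactly the set of $x \in R_{a_1}$ with $Tx \in R_{a_2 \cdots a_k}$, an easy induction on $k$ gives the standard formula
\[
|R_{a_1 \cdots a_k}| = \prod_{j=1}^{k} \frac{1}{a_j(a_j-1)}.
\]

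With this formula in hand, I would apply the same identity once more at level $k+1$ and factor to get
\[
|R_{a_1 \cdots a_k a_{k+1}}| = |R_{a_1 \cdots a_k}| \cdot \frac{1}{a_{k+1}(a_{k+1}-1)}.
\]
By Property (7) of Section~\ref{secPropLuroth}, the elements $R_{a_1 \cdots a_k a_{k+1}}$ for distinct values of $a_{k+1} \in \NN \setminus \{1\}$ have pairwise disjoint interiors, and by Property (6) they are all contained in $R_{a_1 \cdots a_k}$. Their endpoints form a null set, so additivity of Lebesgue measure gives
\[
\big|\bigcup_{a_{k+1} > b} R_{a_1 \cdots a_k a_{k+1}}\big| = |R_{a_1 \cdots a_k}| \sum_{m=b+1}^{\infty} \frac{1}{m(m-1)}.
\]

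Finally, I would evaluate the tail sum using the partial fraction decomposition $\tfrac{1}{m(m-1)} = \tfrac{1}{m-1} - \tfrac{1}{m}$, which telescopes to
\[
\sum_{m=b+1}^{\infty} \left(\frac{1}{m-1} - \frac{1}{m}\right) = \frac{1}{b}.
\]
Substituting this back yields the claimed identity. There is no real obstacle here: once the length formula is in hand the identity reduces to a one-line telescoping computation, so the only point requiring any care is verifying additivity on the disjoint union, which is immediate from the partition properties of the $G_{n}$ listed above.
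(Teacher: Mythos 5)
Your proof is correct and is precisely the elementary argument the paper has in mind (the paper omits the proof, calling it ``easy to verify,'' and later uses the same key identity $|R_{\g b}| = \frac{1}{b(b-1)}|R_\g|$ in the proof of Corollary~\ref{coroDisjointOnLowerGens}). The length formula, disjointness of the children, and the telescoping sum $\sum_{m=b+1}^{\infty}\bigl(\frac{1}{m-1}-\frac{1}{m}\bigr)=\frac 1 b$ are all exactly right.
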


\section{Schmidt games}\label{secSchmidtGames}

In this section, we define Schmidt games and list their basic properties.  W. Schmidt introduced the games which now bear his name in~\cite{Sch2}.  Let $0 < \a <1$ and $0 < \b<1$.  Let $S$ be a subset of a complete metric space $M$ and $\rho(\cdot)$ denote the radius of a closed ball.  Two players, Player $B$ and Player $A$, alternate choosing nested closed balls \[B_1 \supset A_1 \supset B_2 \supset A_2 \cdots\] on $M$ according to the following rules:  \begin{equation} \label{eqnWinningRules} \rho(A_n) =  \a \rho(B_n) \quad \textrm{ and } \quad \rho(B_n) =  \b \rho(A_{n-1}).\end{equation}  The second player, Player $A$, \textit{wins} if the intersection of these balls lies in $S$.  A set $S$ is called \textit{$(\a, \b)$-winning} if Player $A$ can always win for the given $\a$ and $\b$.  A set $S$ is called \textit{$\a$-winning} if Player $A$ can always win for the given $\a$ and every $\b$.  A set $S$ is called \textit{winning} if it is $\a$-winning for some $\a$.  Schmidt games have three important properties for us~\cite{Sch2}: \medskip

\begin{itemize}
	\item An $\a$-winning set in $\RR^n$ is dense and of full Hausdorff dimension.

\item A countable intersection of $\a$-winning sets is $\a$-winning.

\item An $\a$-winning set in $\RR^n$ with a countable number of points removed is $\a$-winning.
\end{itemize} \medskip

Recently, C.~McMullen introduced a variant of these games in which the rules (\ref{eqnWinningRules}) are replaced by \begin{equation*}\rho(A_n) \geq  \a \rho(B_n) \quad \textrm{ and } \quad \rho(B_n) \geq  \b \rho(A_{n-1}).\end{equation*}   This variant results in \textit{strong winning sets}, and the above properties for winning sets apply, \textit{mutatis mutandis}, to strong winning sets in $\RR^n$.  Such strong winning sets are also winning and behave well under quasisymmetric homeomorphisms~\cite{Mc}.  There have been other recent modifications to these games---see~\cite{KW, Mc, BFKRW}.

\section{Proof of the theorem}

We give the proof for the winning property first and note that this proof also suffices for the strong winning property.  Let $\a := 1/8$ and $0<\b<1$ be arbitrary.   Define $c_1:= 25$ and $b:= \lceil \frac {2 c_1} {\a\b}\rceil$.  We will specify a winning strategy for Player $A$.  It is obvious that Player $A$ can choose a ball that misses any given point of Player $B$'s current ball.  Also, by playing the game for a finite number of rounds, Player $A$ can force all subsequent choices of balls to be less to any given radius.  Thus, without loss of generality, we may assume that $B_1$ does not contain the point $0$ and has diameter strictly less than one.  Hence $B_1$ is c.w.g. $g_1 \geq 1$.  

Let $k :=1$.  We claim that the ball $B_k$ contains at most one accumulation point of generation $g_k-1$.  To see this claim, first note that the only way for $B_k$ to contain an accumulation point of a generation strictly less than $g_k-1$ is for the right endpoint of $B_k$ to be this accumulation point (if $B_k$ contains this accumulation point as some other point, then $B_k$ must contain an LE of generation strictly less than $g_k$, a contradiction).  If $B_k$ contains two accumulation points of generations up to $g_k-1$, then it must contain an LE of generation up to $g_k-1$, a contradiction.  Consequently, $B_k$ contains at most one accumulation point of generations up to $g_k-1$, and, if $B_k$ does contain such an accumulation point and this accumulation point has generation strictly smaller than $g_k-1$, then the right endpoint of $B_k$ is the accumulation point.  

If $B_k$ contains an accumulation point, which we denote by $p_k$, of generations up to $g_k-1$, then we denote \begin{align*}B^r_k &:= \{x \in B_k \mid x \geq p_k\} \\ B^\ell_k&:=\{x \in B_k \mid x \leq p_k\}.\end{align*}  For future choices of Player $B$'s balls (i.e. for integers $k >1$), we make the analogous statements and definitions. We will also handle the case for which $B_k$ contains no such accumulation point---see Case 3 below.

\subsection{Initial step} We intend to use induction.  For handling the initial step, let us define a ball $B_0 \supset B_1$ with radius $\frac b {c_1} \rho(B_1)$---while we only care about the radius of $B_0$, for definiteness, let $B_0$ have the same center as $B_1$.  There are three cases to consider:

\begin{description}
\item [Case 1] \textit{The ball $B_1$ contains an accumulation point of generations up to $g_1-1$, and $|B^r_1| \geq |B^\ell_1|.$}

Let $p_1$ denote the accumulation point.  Since it is not the right endpoint of $B_1$, it must be (as shown above) of generation $g_1-1$.  Let $R_\g$ be the LE of the same generation as $p_1$ and having $p_1$ as left endpoint (therefore $p_1$ is the accumulation point of $R_\g$).  Let $q$ denote the left endpoint of $R_{\g2}$ and $p_1^+$ denote the accumulation point of generations up to $g_1-1$ immediately to the right of $p_1$.  Note that $p^+_1$ always exists because $0$ is identified with $1$ and $1$ is a point to the right of all points in $B_1$, a ball which, recall, does not contain $1$.  Then $B^r_1$ is properly contained in the closed interval $[p_1, p^+_1]$.  

Consider two subcases.  First let $B^r_1$ be properly contained in the interval $[p_1, q)$.  It is now clear that Player $A$ can choose $A_1$ to be contained in $B^r_1$ and disjoint from the closed ball $\bar{B}(p_1, \frac 1 b \rho(B_0))$, the LE $R_{\g2}$, and every element of $G_{g_1-1} \backslash \{R_\g\}$.

The other subcase is the case in which $B^r_1$ is not properly contained in the interval $[p_1, q)$.  By Lemma~\ref{lemmSizeofLE}, it follows that $|B^r_1| \geq \frac 1 2 |R_{\g}|$.  Also, since $B_1$ is contained in the union of $R_\g$ and its left-adjacent LE of the same generation (the left-adjacent LE always exists), then $2 |R_{\g}| >|B_1|$ also follows by the same lemma.  The conclusion of the previous paragraph is immediate.

Now $B_2$ is chosen.  If $B_2$ is c.w.g $g_1$, then Player $A$ may choose any ball $A_2$ allowed by the Schmidt game.  Player $A$ can continue to play in this way for any $B_k$ c.w.g. $g_1$.  We claim, however, that, at some iterate $n \geq 2$ of the game, $B_n$ will be commensurate with a generation strictly greater than $g_1$.  The claim follows because, by choice of $A_1$, $A_1$ can contain only a finite number of LEs of generation $g_1$.  If $A_1$ contains no such LE, then the claim follows for $B_2$.  If $A_1$ does contain such an LE, then it contains one of least length.  Thus, for some iterate $n\geq 2$, $B_n$ is too small to contain any such LE, which implies the claim.  

Let $j_2 \in \NN$ such that $B_{j_2}$ is c.w.g $g_1$ and $B_{1+j_2}$ is c.w.g. $g_2 > g_1$.  By the above, $B_{1+j_2} \subset R_{\g} \backslash R_{\g2}$.  Form the set of LEs of the relevant generations that intersect $B_{1+j_2}$:  \[\mE := \big\{E \in \bigcup_{g = g_1}^{g_2-1} G_g \mid E \cap B_{1+j_2} \neq \emptyset \big\}.\]  Since $B_{1+j_2}$ is c.w.g. $g_2$, the ball is contained in at most two elements of $G_{g_2-1}$ by Corollary~\ref{coroAtMost1AccPoint}.  Therefore, every element in $\mE$ must contain one or the other element of generation $g_2-1$.\footnote{Even more, every element of $\mE$ not of generation $g_2-1$ must contain both (if there are two).}  We prove the following lemma, whose analogous version is also needed for the inductive step.

\begin{lemm}\label{lemmBallBiggerHoles}
The diameter of the ball $B_{1+j_2}$ is larger than the diameter of any interval $(p, p+\frac{\sqrt{c_1}}{b} |E|)$ where $E \in \mE$ and $p$ is the accumulation point of $E$.
\end{lemm}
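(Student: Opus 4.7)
The plan is to show that $|E| \leq 2|B_{j_2}|$ for every $E \in \mE$, from which the conclusion follows at once from the Schmidt game's shrinking relation $|B_{j_2}| = |B_{1+j_2}|/(\a\b)$ and the choice of $b$.

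First I would pin down the location of $E$: every $E \in \mE$ is contained in a unique generation-$g_1$ sub-L\"uroth element $R_{\g m}$ of $R_\g$ with $m \geq 3$. Indeed, since the generation of $E$ is at least $g_1$, which exceeds the generation $g_1 - 1$ of $R_\g$, the tree structure of L\"uroth elements forces $E$ either to be contained in $R_\g$ or disjoint from it; since $E \cap B_{1+j_2} \neq \emptyset$ and $B_{1+j_2} \subset A_1 \subset R_\g$, we get $E \subset R_\g$. A parallel comparison of generations between $E$ and $R_{\g 2}$, together with $B_{1+j_2} \cap R_{\g 2} = \emptyset$, rules out $E$ intersecting $R_{\g 2}$. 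Hence $E \subset R_{\g m}$ with $m \geq 3$, and so $|E| \leq |R_{\g m}| = |R_\g|/(m(m-1))$.

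Next I would bound $|R_{\g m}| \leq 2|B_{j_2}|$ by case analysis. If $R_{\g m} \subset B_{j_2}$, this is immediate. Otherwise $R_{\g m}$ sticks out of $B_{j_2}$ on one side; by symmetry, say the right, so $B_{j_2}$'s right endpoint lies strictly inside $R_{\g m}$. Since $B_{j_2}$ is c.w.g. $g_1$ and contained in $R_\g \setminus R_{\g 2}$, it must contain some generation-$g_1$ sub-LE $R_{\g m^*}$ with $m^* \geq 3$, and a position comparison forces $m^* > m$. The interval $B_{j_2}$ then contains every intermediate sub-LE $R_{\g m+1}, \ldots, R_{\g m^*}$ entirely. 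By Lemma~\ref{lemmSizeofLE} (telescoping),
\[\sum_{m' = m+1}^{m^*} |R_{\g m'}| = |R_\g|\bigl(\tfrac{1}{m} - \tfrac{1}{m^*}\bigr),\]
and a direct computation yields
\[\frac{|R_{\g m}|}{\sum_{m' = m+1}^{m^*} |R_{\g m'}|} = \frac{m^*}{(m-1)(m^*-m)} \leq \frac{m+1}{m-1} \leq 2\]
for $m \geq 3$, so $|R_{\g m}| \leq 2|B_{j_2}|$.

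Finally, since $|B_{j_2}| = |B_{1+j_2}|/(\a\b)$ and $b \geq 2c_1/(\a\b)$, we have $\sqrt{c_1}/b \leq \a\b/(2\sqrt{c_1}) = \a\b/10$, so
\[\frac{\sqrt{c_1}}{b}|E| \leq \frac{\a\b}{10} \cdot 2|B_{j_2}| = \frac{|B_{1+j_2}|}{5} < |B_{1+j_2}|.\]
The main obstacle will be the second step: showing that, even when $R_{\g m}$ straddles the boundary of $B_{j_2}$, the telescoping identity from Lemma~\ref{lemmSizeofLE} forces enough other generation-$g_1$ sub-LEs to lie inside $B_{j_2}$ so that $|R_{\g m}|$ is bounded by a small constant multiple of $|B_{j_2}|$; here the factor $2$ is small enough to be absorbed by $\sqrt{c_1} = 5$ in the final estimate.
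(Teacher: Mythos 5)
Your proof is correct, and it reaches the same quantitative conclusion as the paper's, but by a genuinely different device for the key estimate. The paper builds the chain $E_{g_2}\subset\cdots\subset E_{g_1}$, notes that $B_{j_2}$ (being c.w.g.\ $g_1$) must completely contain one of $\{E^-_{g_1},E_{g_1},E^+_{g_1}\}$, and then uses Lemma~\ref{lemmSizeofLE} to compare $|E_{g_1}|$ with its \emph{adjacent} siblings (whose lengths are within a factor $\tfrac{m+1}{m-1}\le 2$ for $m\ge 3$), concluding $E_{g_1}\subset\tilde B_{j_2}$, the concentric ball of $\sqrt{c_1}$ times the radius. You instead locate the relevant generation-$g_1$ parent $R_{\gamma m}$ ($m\ge 3$) of each $E\in\mE$ and compare $|R_{\gamma m}|$ with the \emph{telescoping sum} $\sum_{m'=m+1}^{m^*}|R_{\gamma m'}| = |R_\gamma|(\tfrac1m-\tfrac1{m^*})$ of all generation-$g_1$ siblings that $B_{j_2}$ is forced to swallow, giving $|R_{\gamma m}|\le 2|B_{j_2}|$ directly. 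Both routes exploit the same two facts --- that $B_{j_2}$ is c.w.g.\ $g_1$ and that $A_1$ excludes $R_{\gamma 2}$ and the other elements of $G_{g_1-1}$ (so $m\ge 3$) --- and both reduce to the same size formula; the paper's adjacency argument is shorter, while your telescoping argument is more self-contained and yields a slightly tighter constant ($2$ rather than $\sqrt{c_1}=5$), which you then absorb with $\sqrt{c_1}$ to spare in the final computation. One small point worth tightening if you were to write this up: the phrase ``by symmetry, say the right'' is not a literal symmetry (the L\"uroth tree accumulates only at left endpoints), but the left-overhang case is handled by the same telescoping with $m^*<m$ and gives an even smaller ratio $\tfrac{m^*-1}{m(m-m^*)}<1$, so the conclusion stands.
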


\begin{proof}
Since  $B_{1+j_2}$ is c.w.g. $g_2$, there exists an element $E_{g_2} \in G_{g_2}$ such that $E_{g_2} \subset B_{1+j_2}$.  Consequently, there exists a unique element $E_{g_2-1} \in G_{g_2-1}$ such that $E_{g_2} \subset E_{g_2-1}$.  By recursion, we may define the chain of inclusions\[E_{g_2} \subset E_{g_2-1} \subset \cdots \subset E_{g_1}\] where $E_{g} \in G_g$.

Since $B_{1+j_2} \subset A_1$, we have that $E_{g_1} \cap A_1 \neq \emptyset$.  By the choice of $A_1$, the L{\"u}roth expansion corresponding to $E_{g_1}$ cannot end in the digit $2$, which implies that both $E^+_{g_1}$ and $E^-_{g_1}$ exist.

Since $B_{1+j_2} \subset B_{j_2}$, we have that $E_{g_1} \cap B_{j_2} \neq \emptyset$.  Since $B_{j_2}$ is c.w.g $g_1$, $B_{j_2}$ must completely contain (at least) one element of the set $\{E^-_{g_1}, E_{g_1}, E^+_{g_1}\}$.

Let $\tilde{B}_{j_2}$ denote the closed ball with the same center as $B_{j_2}$, but with $\sqrt{c_1}$ times the radius.  By an easy argument using Lemma~\ref{lemmSizeofLE}, we have that $E_{g_1} \subset \tilde{B}_{j_2}$.  Now note that $\sqrt{c_1} |B_{1+j_2}| \geq \sqrt{c_1} \frac{2c_1}{b} |B_{j_2}| \geq  \frac{c_1}{b} |E_g|$ for $g_1 \leq g \leq g_2$, which implies the desired result. \end{proof}

\begin{coro}\label{coroDisjointOnLowerGens}
Let $g_2 > g_1+1$.  The ball $B_{1+j_2}$ is disjoint from every interval $(p, p+\frac{1}{b} |E|)$ where $E$ is an LE of generations from $g_1$  to $g_2-2$ and $p$ is the accumulation point of $E$.
\end{coro}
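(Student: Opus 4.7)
The plan is to argue by contradiction: the size estimate in Lemma~\ref{lemmBallBiggerHoles} will force $B_{1+j_2}$ to completely contain an LE of generation $g+1$ whenever it meets the ``forbidden strip'' $(p,p+\frac{1}{b}|E|)$, and this contradicts the commensurate-with-generation-$g_2$ property of $B_{1+j_2}$.

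First I would reduce to the case $E\in\mE$: if $E\cap B_{1+j_2}=\emptyset$, then $(p,p+\frac{1}{b}|E|)\subset E$ is already disjoint from $B_{1+j_2}$. Then I would invoke the observation from the footnote in the proof of Lemma~\ref{lemmBallBiggerHoles} (whose justification uses that accumulation points of distinct generations are disjoint, together with the tree structure of LEs) to upgrade ``$E$ meets $B_{1+j_2}$'' to ``$B_{1+j_2}\subset E$'' for $g\le g_2-2<g_2-1$: each element of $\mE$ of generation strictly less than $g_2-1$ contains both of the (at most two) generation-$(g_2-1)$ elements that meet $B_{1+j_2}$, and hence contains $B_{1+j_2}$ itself.

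The heart of the proof is the following quantitative step. Write $E=R_{a_1\cdots a_g}$, so its generation-$(g+1)$ children are the LEs $R_{a_1\cdots a_g a}=[p+|E|/a,\,p+|E|/(a-1))$ (together with the right endpoint) for $a\ge 2$. Suppose $B_{1+j_2}=[\ell,\ell+|B_{1+j_2}|]$ meets $(p,p+\frac{1}{b}|E|)$; since $B_{1+j_2}\subset E$, this gives $\ell\in[p,p+\frac{1}{b}|E|)$, hence $\ell-p<|E|/b$. Combining this with Lemma~\ref{lemmBallBiggerHoles}, which gives $|B_{1+j_2}|>\sqrt{c_1}|E|/b=5|E|/b$, the containment $R_{a_1\cdots a_g a}\subset B_{1+j_2}$ reduces to the two inequalities $a\le|E|/(\ell-p)$ and $a\ge 1+|E|/(|B_{1+j_2}|+\ell-p)$, which are implied by $a\le b$ (vacuously if $\ell=p$) and $a\ge 1+b/5$ respectively. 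Since $b=\lceil 2c_1/(\a\b)\rceil$ is very large, an integer $a$ with $2\le 1+b/5\le a\le b$ exists, and the corresponding child, an LE of generation $g+1\le g_2-1$ contained in $B_{1+j_2}$, contradicts $B_{1+j_2}$ being c.w.g.~$g_2$.

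The main obstacle is recognizing the strategy rather than executing it: the forbidden strip $(p,p+\frac{1}{b}|E|)$ is precisely the union $\bigcup_{a>b}R_{a_1\cdots a_g a}$ of the short, leftmost generation-$(g+1)$ children of $E$ (by Lemma~\ref{lemmSizeofLE}), and a ball of length exceeding $5|E|/b$ whose left endpoint lies in this thin strip must swallow a moderate-digit child whole. Once this picture is clear, the role of the factor $\sqrt{c_1}=5$---providing exactly the slack that makes the integer $a\in[1+b/5,b]$ exist---and of $b$ being huge is transparent.
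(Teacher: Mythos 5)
Your proposal is correct and follows essentially the same route as the paper: the paper also splits into the case where $E$ is not the generation-$g$ ancestor of the two generation-$(g_2-1)$ elements containing $B_{1+j_2}$ (so the ball misses the interior of $E$) and the case $B_{1+j_2}\subset E$, where it uses Lemma~\ref{lemmBallBiggerHoles} to show the ball would swallow the child $R_{\gamma b}$ adjacent to the strip, contradicting c.w.g.~$g_2$ --- your choice $a=b$ is exactly the paper's. The only nitpick is that an element of $\mE$ of generation $\le g_2-2$ may meet $B_{1+j_2}$ only at a single boundary point without containing it, so the upgrade to ``$B_{1+j_2}\subset E$'' needs the extra (trivial) observation that in that case the ball still misses the open strip, which lies in the interior of $E$.
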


\begin{proof}
 
Let $F$ and $F^-$ be the (possibly) two LEs of generation $g_2-1$ whose union contains (properly) $B_{1+j_2}$ (by Corollary~\ref{coroAtMost1AccPoint}).  Since $F$ and $F^-$ are adjacent LEs, they lie in the same LE of generation $g_2-2$ (if not, then $B_{1+j_2}$ is not c.w.g. $g_2$, which is a contradiction) and thus the same LE---call it $F'$---of whatever generation $E$ is.  There are two cases.
\begin{description}
\item[Case A]  The LEs $E$ and $F'$ are distinct elements of the same generation.

The LEs $E$ and $F'$ can only intersect at an endpoint, which is an accumulation point of either $E$ or $F'$.  However, since $B_{1+j_2}$ is  contained in $F'$, it is disjoint from the interior of $E$, implying the desired result.

\item[Case B] The LEs $E$ and $F'$ are the same element.  

Assume that the conclusion does not hold.  Let $E:=R_\g$ for some string $\g$.  Now by the elementary properties of L{\"u}roth expansions (\cite{DK}), $|R_{\g b}| = \frac{1}{b(b-1)} |R_\g|$---note that the LE $R_{\g b}$ is disjoint from $[p, p+\frac{1}{b} |E|)$ and $[p, p+\frac{1}{b-1} |E|] = [p, p+\frac{1}{b} |E|) \cup R_{\g b}$ by elementary L{\"u}roth expansion properties (see also Lemma~\ref{lemmSizeofLE}).  Since $p$ is the left endpoint of $E$ and since $B_{1+j_2}$ is contained in $E$, then the assumption implies that the left endpoint of $B_{1+j_2}$ is contained in $[p, p+\frac{1}{b} |E|)$.  Therefore, Lemma~\ref{lemmBallBiggerHoles} implies that  $B_{1+j_2}$ contains $R_{\g b}$, an LE of generation at most $g_2-1$.  This contradicts the fact that $B_{1+j_2}$ is c.w.g. $g_2$.
\end{description} \end{proof}

If the interior of $B_{1+j_2}$ contains an accumulation point $q$ of generations up to $g_2-1$, then it must be of generation $g_2-1$ (otherwise, $B_{1+j_2}$ is not c.w.g. $g_2$).  Let $E_q$ be the LE corresponding to $q$.  Let $q^-$ be the accumulation point of $E_q^-$.  Then the only two intervals that Player $A$ must avoid and may (possibly) intersect $B_{1+j_2}$ are $(q^-, q^- +\frac{1}{b} |E^-_{q}|)$ and $(q, q +\frac{1}{b} |E_{q}|)$.   If  the interior of $B_{1+j_2}$ does not contain an accumulation point of generations up to $g_2-1$, then $B_{1+j_2}$ lies completely in some $E_q \in G_{g_2-1}$ where $q$ is the corresponding accumulation point---and the one interval that Player $A$ must avoid is $(q, q +\frac{1}{b} |E_{q}|)$.   In either case, Lemma~\ref{lemmBallBiggerHoles} implies that $B_{1+j_2}$ is at least $\sqrt{c_1}/2$ times larger than the (union of) interval(s) Player $A$ must avoid, and the corollary implies that if $g_2 > g_1+1$, then $B_{1+j_2}$ automatically avoids any intervals $(p, p +\frac{1}{b} |E|)$ where $E \in \bigcup_{g=g_1}^{g_2-2} G_g$. \medskip

\item [Case 2] \textit{The ball $B_1$ contains an accumulation point of generations up to $g_1-1$, and $|B^r_1| < |B^\ell_1|.$}

Let $p_1$ denote the accumulation point.  Since $B_1$ is c.w.g $g_1$, $B^\ell_1$ is properly contained in an element $R_\g \in G_{g_1-1}$ by Corollary~\ref{coroAtMost1AccPoint}.  Then $p_1$ is the right endpoint of $R_\g$.  Player $A$ chooses $A_1$ to be the closed ball with right endpoint $p_1$.  Since $|B^\ell_1| < |R_\g|$ and $\a$ is small enough, $A_1 \subset R_{\g2}$ and, moreover, if $p$ is the accumulation point of $R_{\g2}$, then $A_1$ is disjoint from $[p, p +\frac{1}{b} |R_{\g2}|)$.  Therefore, $A_1$ is disjoint from all intervals $(p, p+\frac{1}{b} |E|)$ where $p$ is an accumulation point of generations up to $g_1$ and $E$ is the LE corresponding to $p$.

Now $B_2$ is chosen.  By the choice of $A_1$,  $A_1$ is c.w.g $g > g_1$, and, even more precisely, $R_{\g2\tilde{\g} 2} \subset A_1\subset R_{\g2\tilde{\g}}$ where $\tilde{\g}$ is a string of $g-g_1-1$ repeating digits $2$.  Thus, we have that $|A_1| > |R_{\g2\tilde{\g} 2}| = \frac 1 2 |R_{\g2\tilde{\g}}|$ by Lemma~\ref{lemmSizeofLE}.  Let $B_2$ be c.w.g $g_2$---hence $g_2 \geq g$.  If $g_2 = g$, then $B_2$ is contained in $E:=R_{\g2\tilde{\g}}$, an LE of generation $g_2-1$.    If $g_2 > g$, then, by Corollary~\ref{coroAtMost1AccPoint}, $B_2$ must be contained in at most two adjacent LE of generation $g_2-1$ both of which lie in $R_{\g2\tilde{\g}}$; denote these two elements by $E^-$ and $E$.  

Thus, for $g_2 \geq g$, we have $|E^-| < |E| \leq |R_{\g2\tilde{\g}}|$.  Consequently, $|B_2| \geq \frac {2 c_1} {\a b} |A_1| \geq \frac { c_1} {\a b} |R_{\g2\tilde{\g}}|$, which is much larger than the (at most two) interval(s)---namely, $[q^-, q^-+ \frac 1 b |E^-|)$ and $[q, q+ \frac 1 b |E|)$ where $q^-$ and $q$ are the accumulation points of $E^-$ and $E$, respectively---that Player $A$ must avoid.  

\begin{lemm}
Let $g_2 > g_1+1$.  The ball $B_{2}$ is disjoint from every interval $(p, p+\frac{1}{b} |F|)$ where $F$ is an LE of generations from $g_1$  to $g_2-2$ and $p$ is the accumulation point of $F$.
\end{lemm}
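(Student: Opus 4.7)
The plan is to mirror the proof of Corollary~\ref{coroDisjointOnLowerGens}, substituting the geometry of Case~2 (where $B_2 \subset A_1 \subset R_{\g2\tilde{\g}}$ and $|B_2| \geq \frac{c_1}{\a b}|R_{\g2\tilde{\g}}|$) for the geometry used there. Fix an LE $F$ of generation $h$ with $g_1 \leq h \leq g_2-2$ and accumulation point $p$. The first step is to extract a unique LE $F'$ of generation $h$ containing $B_2$: by Corollary~\ref{coroAtMost1AccPoint}, $B_2$ is properly contained in at most two adjacent LEs of generation $g_2-1$, and the same parent-chasing argument used in Corollary~\ref{coroDisjointOnLowerGens} (if these adjacent LEs had distinct parents at generation $g_2-2$, the right one would have its left endpoint equal to the left endpoint of its parent, impossible by Property~(3)) shows they share an ancestor at every generation $\leq g_2-2$, yielding a unique LE $F'$ of generation $h$ with $B_2 \subset F'$. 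If $F \neq F'$, Property~(7) makes the open interval $(p, p+\tfrac{1}{b}|F|) \subset \mathrm{int}(F)$ disjoint from $F'$, hence from $B_2$.

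For $F = F'$ I would split according to the size of $h$. When $h \leq g-2$, the LE $F'$ is a strict ancestor of $R_{\g2\tilde{\g}}$ obtained by deleting trailing digits ``$2$'', so by Lemma~\ref{lemmSizeofLE} each deletion doubles the length and $|F'| \geq 2|R_{\g2\tilde{\g}}|$. Since $F'$ and $R_{\g2\tilde{\g}}$ share the right endpoint $p_1$, the bad interval $(p, p+\tfrac{1}{b}|F'|) = \bigl(p_1-|F'|,\ p_1-|F'|(1-\tfrac{1}{b})\bigr)$ is contained in $\bigl(-\infty,\ p_1-\tfrac{1}{2}|F'|\bigr] \subset \bigl(-\infty,\ p_1-|R_{\g2\tilde{\g}}|\bigr]$ for $b \geq 2$, hence is disjoint from $R_{\g2\tilde{\g}} = [p_1-|R_{\g2\tilde{\g}}|, p_1] \supset B_2$.

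The main obstacle is the remaining subcase $h \geq g-1$, where the geometric bound $|F'| \geq 2|R_{\g2\tilde{\g}}|$ fails and a quantitative argument is required. Here I would imitate the Lemma~\ref{lemmBallBiggerHoles}-style argument from Corollary~\ref{coroDisjointOnLowerGens} using the estimate $|B_2| \geq \frac{c_1}{\a b}|R_{\g2\tilde{\g}}| \geq \frac{c_1}{\a b}|F'| = \frac{200}{b}|F'|$ (recall $c_1 = 25$, $\a = 1/8$). First, Property~(3) forces the left endpoint of $B_2$ to be strictly greater than $p$: otherwise $B_2 \supset [p, p+\epsilon]$ for some $\epsilon > 0$, and infinitely many sub-LEs of $F'$ of generation $h+1 \leq g_2-1$ accumulating at $p$ would be contained in $B_2$, contradicting c.w.g.~$g_2$. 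Then, assuming for contradiction that the left endpoint of $B_2$ lies in $(p, p+\tfrac{1}{b}|F'|)$, the size estimate (far exceeding $\frac{1}{b-1}|F'|$ for $b \geq 2$) forces the right endpoint of $B_2$ to exceed $p+\tfrac{1}{b-1}|F'|$, whence $B_2$ contains $R_{\g', b} = \bigl[p+\tfrac{1}{b}|F'|,\ p+\tfrac{1}{b-1}|F'|\bigr]$ (where $\g'$ is the digit sequence of $F'$), an LE of generation $h+1 \leq g_2-1$. This again contradicts c.w.g.~$g_2$, so the left endpoint of $B_2$ is $\geq p+\tfrac{1}{b}|F'|$, yielding the required disjointness.
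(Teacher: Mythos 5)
Your proposal is correct and follows essentially the same route as the paper's proof: reduce to the unique generation-$h$ LE $F'$ containing $B_2$, dispose of $F \neq F'$ by disjointness of interiors, and split $F = F'$ into the trailing-digit-$2$ ancestors of $R_{\g2\tilde{\g}}$ (generations $\leq g-2$) versus generations $\geq g-1$, where the lower bound $|B_2| \geq \frac{c_1}{\a b}|F'|$ forces $B_2 \supset R_{\g' b}$ and contradicts c.w.g.\ $g_2$. Your explicit computation in the $h \leq g-2$ subcase (doubling lengths via Lemma~\ref{lemmSizeofLE} and the shared right endpoint $p_1$) correctly fills in what the paper dismisses as ``clear from the elementary properties of L{\"u}roth expansions.''
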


\begin{proof}
Since $E$ and $E^-$ are adjacent LEs of generation $g_2-1$, they lie in the same LE of generation $g_2-2$ and thus the same LE---call it $E'$---of whatever generation $F$ is.  There are two cases.
\begin{description}
\item[Case A]  The LEs $F$ and $E'$ are distinct elements of the same generation.

The proof is analogous to the proof of Case A of Corollary~\ref{coroDisjointOnLowerGens}.

\item[Case B] The LEs $F$ and $E'$ are the same element.

There are two subcases to consider.  The first is when $F$ is of generations $g_1$ to $g-2$ (provided that $g>g_1+1$; otherwise, this subcase is not needed).  Since $A_1\subset R_{\g2\tilde{\g}}$ where $\tilde{\g}$ is a string of $g-g_1-1$ repeating digits $2$, the result is clear from the elementary properties of L{\"u}roth expansions and the size of $b$. 

The second subcase is when $F$ is of generations $g -1$ to $g_2-2$.  Assume that the conclusion does not hold.  The condition of this case, Case B, implies that $F \subset R_{\g2\tilde{\g}}$.  Let $\t$ be the (possibly empty) string such that $F = R_{\g2\tilde{\g} \t}$ and $p$ be the left endpoint of $F$.  Thus $|R_{\g2\tilde{\g} \t b}| = \frac{1}{b(b-1)} |R_{\g2\tilde{\g}\t}|$---note that the LE $R_{\g2\tilde{\g}\t b}$ is disjoint from $[p, p+\frac{1}{b} |F|)$ and $[p, p+\frac{1}{b-1} |F|] = [p, p+\frac{1}{b} |F|) \cup R_{\g2\tilde{\g}\t b}$.  Since $p$ is the left endpoint of $F$ and since $B_2$ is contained in $F$, then the assumption implies that the left endpoint of $B_{2}$ is contained in $[p, p+\frac{1}{b} |F|)$.  Since $|B_2| \geq \frac {2 c_1} {\a b} |A_1| \geq \frac { c_1} {\a b} |R_{\g2\tilde{\g}}| \geq \frac { c_1} {\a b}|F|$, we have that $B_{2}$ contains $R_{\g2\tilde{\g}\t b}$, an LE of generation at most $g_2-1$.  This contradicts the fact that $B_{2}$ is c.w.g. $g_2$.
\end{description}\end{proof}

Let $j_2=1$; then, we conclude as in Case 1.

\item [Case 3] \textit{The ball $B_1$ does not contain an accumulation point of generations up to $g_1-1$.}

The given condition on $B_1$ implies that it is completely contained in the interior of an LE $E$ of generation $g_1-1$.  Now since $B_1$ is c.w.g. $g_1$, it completely contains an LE of generation $g_1$---if it were to contain such an element with last digit $2$, it would contain an accumulation point of generations up to $g_1-1$, a contradiction.  Therefore, it contains an LE of generation $g_1$ with last digit not $2$.

Let $p_1$ be the accumulation point corresponding to $E$.  And let $p_1^+$ be the accumulation point of generations up to $g_1-1$ which is also the right endpoint of $E$.

Setting $E = R_\g$ and treating $B_1$ as we did $B_1^r$ in Case 1, we handle this case exactly as Case 1, except for the following.  Recall the definition of $q$ from Case 1.  The first subcase is handled exactly as in Case 1.  The second subcase---in which $B_1$ is not properly contained in the interval $[p_1, q)$---is handled as follows.  Since $B_1$ is c.w.g. $g_1$, it cannot be properly contained in $R_{\g2}$.  Therefore, the condition of this subcase implies that $B_1$ contains $q$.  Whence, it must contain $R_{\g3}$ because it is c.w.g $g_1$ and it cannot contain $R_{\g2}$.  Therefore, $B_1$ is large relative to $R_\g$ (more precisely, it is at least $1/6$ the length of $R_\g$) and it is clear that $A_1$ can be chosen to be disjoint from the closed ball $\bar{B}(p_1, \frac 1 b \rho(B_0))$, the LE $R_{\g2}$, and every element of $G_{g_1-1} \backslash \{R_\g\}$.  The remainder of this case (Case 3) is handled exactly as the remainder of Case 1.

\end{description}
This completes the three cases and the initial step of  the induction.

\subsection{Induction step}  Our induction index is $n$.  Let $j_1 := 1$.  Let $J_n := \sum_{i=1}^n j_i$.  By the induction hypothesis, $B_{J_n}$ is c.w.g $g_n > g_{n-1}$; therefore, there exists (by Corollary~\ref{coroAtMost1AccPoint}), either two LEs $E$ and $E^-$ of generation $g_n-1$ such that $B_{J_n} \subset E^- \cup E$ or one LE $E$ of generation $g_n-1$ such that $B_{J_n} \subset E$.  If $g_n > g_1 +1$, then, also by the induction hypothesis, $B_{J_n}$ avoids $(q, q+\frac{1}{b} |F|)$ where $q$ is an accumulation point of generations $g_1$ up to $g_{n}-2$ and $F$ is the LE corresponding to $q$.  Finally, again by the induction hypothesis, $|B_{J_n}|$ is (at least) $\sqrt{c_1}/2$ times larger than $|(p^-, p^-+\frac{1}{b} |E^-|) \cup (p, p+\frac{1}{b} |E|)|$ where $p$ is the accumulation point of $E$ and $p^-$ is the accumulation point of $E^-$.

Thus, it is possible for Player $A$ to chose $A_{J_n}$ to be disjoint from all $(p, p+\frac{1}{b} |E|)|$ where $E$ is an LE of generation $g_n-1$ and $p$ is the accumulation point of $E$.  We choose $A_{J_n}$, which will be so disjoint, by considering three cases:

\begin{description}
	\item[Case 1] \textit{The ball $B_{J_n}$ contains an accumulation point of generations up to $g_n-1$, and $|B^r_{J_n}| \geq |B^\ell_{J_n}|.$}

This case is handled in the analogous way to Case 1 of the initial step---except $B_1$ is replaced by $B_{J_n}$, $A_1$ is replaced by $A_{J_n}$, $B_2$ is replaced by $B_{J_n+1}$, and so on.  Also, the two subcases give similar conclusions as in the inductive step, except $A_{J_n}$ is disjoint from $(p_n, p_n+\frac{1}{b} |R_\g|)$ and the LE $R_{\g2}$ (recall that $p_n$ must be an accumulation point of generation $g_n-1$ for this case).  The rest of this case is analogous to the initial step.

	\item[Case 2] \textit{The ball $B_{J_n}$ contains an accumulation point of generations up to $g_n-1$, and $|B^r_{J_n}| < |B^\ell_{J_n}|.$}

	This case is handled in the analogous way to Case 2 of the initial step---except $B_1$ is replaced by $B_{J_n}$, $A_1$ is replaced by $A_{J_n}$, $B_2$ is replaced by $B_{J_n+1}$, and so on.  

	\item[Case 3] \textit{The ball $B_{J_n}$ does not contain an accumulation point of generations up to $g_n-1$.}

This case is handled in the analogous way to Case 3 of the initial step---except $B_1$ is replaced by $B_{J_n}$, $A_1$ is replaced by $A_{J_n}$, $B_2$ is replaced by $B_{J_n+1}$, and so on.  (Note that, as in Case 1 of the induction step, the two subcases follow easily.  For the first subcase, consider as in Case 1 of the induction step; for the second subcase, consider as in Case 3 of the initial step.) \end{description}

This completes the induction.  

\subsection{Finishing the proof for winning} Let $x \in X$.  If $E$ is an LE, let $p_E$ be the accumulation point of $E$.  The induction above shows that the set \begin{align*}BL(b):=\bigg\{x & \in X \mid \text{ there exists an } n(x) \in \NN \text{ such that } \\ & x\notin \bigcup_{m \geq n}  \bigcup_{E \in G_m} (p_E, p_E+ \frac 1 b |E|) \text{ and there exists } c(x) >0 \text{ such that } \\ & x \notin \bigcup_{m=0}^{n-1} \bigcup_{E \in G_m} (p_E, p_E+ c |E|)  \bigg\}\end{align*} is $(\a,\b)$-winning.  (Note that a given point $x$ lies in at most two LEs of the same generation.  Therefore, for a fixed $n(x) \in \NN$, $x$ lies in either $n+1$ or $n+2$ LEs of generations up to $n$.)  Consequently, the set $BL:=\bigcup_{a \geq b} BL(b)$ is $\a$-winning.  

Now let $x \in BL$.  Then $x \notin \bigcup_{m \geq 0} \bigcup_{E \in G_m} (p_E, p_E+ \tilde{c} |E|)$ where $\tilde{c} := \min \{c, \frac 1 a\}$.  Thus, $BL$ is contained in the set of numbers with bounded L{\"u}roth expansion.  And the proof of the theorem for $\a$-winning is complete.

Finally, note that accumulation points have two L{\"u}roth expansions (using a slightly less strict algorithm than the one given in Section~\ref{secIntro}).  One expansion is finite and the other has trailing digits $2$---in either case, bounded.  Removing all accumulation points from $BL$, however, still results in an $\a$-winning set.

\subsection{Proof for strong winning}\label{secProofForSWin}  Recall that we must find a winning strategy for Player $A$.  Therefore, we may and do constrain Player $A$'s choices of radius size to be always equal to $\a$ times the radius of Player $B$'s current ball.  With this constraint, the proceeding proof, without change, shows that $BL$ is $\a$-strong winning.

\section{Conclusion}\label{secConcl}

In this section, we compare the result in this paper to the second-named author's result in~\cite{T4}, discuss infinite versus bounded distortion, and suggest further applications of the technique in this paper.

\subsection{Infinite versus finite element Markov partitions}\label{secInfVsFinDistort}

To distinguish between the result in this paper and the work in Section~5 of~\cite{T4}, it helps to consider strings and their associated Markov partitions.  The basic idea is that the strings encode the motion of the associated dynamical system (in our case the L{\"u}roth map $T$), and this encoding is established using Markov partitions.  In our case, the collection of L{\"u}roth elements in $G_1$ is a natural choice for a Markov partition associated with $T$.  In particular, $G_1$ satisfies the definition of a Markov partition for $T$ as stated in Section~2.1 of~\cite{T4}---except that the cardinality of $G_1$ is not finite, but infinite.  Note also that the definition of $R_\a$ for a (finite) string $\a$ and properties~(2.6 -- 2.13) of~\cite{T4} also hold for $G_1$.  (For completeness, the transition matrix for our $T$ is an infinite matrix of all $1$'s in the letters $\NN \backslash \{1\}$.)  Hence, $G_1$ is an example of an infinite element Markov partition, while the Markov partitions in~\cite{T4} are the usual (finite element) Markov partitions.

But, although $G_1$ is an infinite element Markov partition, it is, perhaps, the simplest example.  It and its associated L{\"u}roth expansions are the infinite element analog of $n$-ary expansions where $n>1$ is an integer.  In this latter case, we can take the simplest Markov partition of $\{[0, \frac 1 n], \cdots, [\frac {n-1} n, 1]\}$.  If, further, we only ask to miss an open neighborhood with $0$ as the left endpoint, then the proof in~\cite{T4} greatly simplifies:  the difficulty in this proof is in dealing with missing a neighborhood of an arbitrary point and in dealing with the nonlinearity of the dynamical system.\footnote{As far as the authors' know, the nonlinear case of this type of result in higher dimensions is still open.}  Moreover, missing a neighborhood of $0$ for the $n$-ary expansion was already shown by Schmidt in 1966~\cite{Sch2}.

\subsection{Infinite versus bounded distortion}\label{secInfVsFinDistort2}  What is different between this infinite element Markov partition and the usual finite element Markov partitions is distortion.  In particular, finite element Markov partitions have the {\it bounded distortion property}, or, equivalently, have {\it bounded distortion (in the sense of Markov partitions)}, in that, over a fixed, finite window of generations, the ratio of length (or other relevant measure) of the element with largest length to smallest length is finite (and this finite bound depends only on the size of the window).  Explicitly, the bounded distortion property is the property given in Lemma~2.2 of~\cite{T4}.   Finite element Markov partitions satisfy this property (as the lemma asserts), while our infinite element Markov partition clearly does not, even if the window is the smallest that it can be, namely over the same generation.\footnote{It seems appropriate to refer to this property as \textit{infinite distortion}, while leaving \textit{unbounded distortion} for the case where, as one slides the fixed window toward infinity, the bound on distortion grows to infinity.}  Thus, our dynamical system has \textit{infinite distortion (in the sense of Markov partitions)}, while the ones considered in~\cite{T4} have bounded distortion.  And this difference does not allow us to port the proofs in~\cite{T4}, even in the simplest case of missing an open neighborhood with left endpoint $0$---the infinite distortion, which manifests itself in our notion of accumulation points for the L{\"u}roth expansion, is a new source of infinity and a new source of difficulties.  But, the infinite distortion is also a new source of solutions in that, the accumulation points, which manifest the infinite distortion, are also used, via the proofs of Lemma~\ref{lemmBallBiggerHoles} and Corollary~\ref{coroDisjointOnLowerGens}, to our advantage because these points can be distinguished from non-accumulation points via the geometry of the expansion and the notion of commensurate.

\subsection{Further applications}\label{subsecFA}

We intend the proof in this paper to be a model for applying Schmidt games to cases where infinite distortion exist.  These cases naturally occur in number theory and dynamical systems, especially in other types of expansions.  Many further applications are possible.

We mention one such application.  The continued fraction expansion is the most natural expansion because it leads to notions of best approximations---the applications of continued fractions are far-reaching and important.  Like its variant the L{\"u}roth expansion, the digits of the continued fraction expansion are generated by a dynamical system, in this case the well-known Gauss map, which is a system where infinite distortion exists, much like our map $T$.  Applying our proof technique, but modifying it to handle the fact that accumulation points for the Gauss map alternate between left and right endpoints of the ``Gauss elements'' for odd and even generation numbers, should lead to another proof that the set of real numbers with bounded continued fraction expansion or, equivalently, the set of badly approximable numbers is winning.  The two known proofs are based on the repulsion of the elements of a Farey sequence for fixed denominators~\cite{Sch2}, which is a number-theoretic proof, and bounded orbits under the geodesic flow in the space of unimodular lattices~\cite{Da3}, which is a dynamical proof.  The proof involving the flow, however, is not the most elementary dynamical proof because the geodesic flow can be regarded as a suspension of the Gauss map and thus is not the dynamical system which defines the continued fraction expansion, but an induced system.  A proof adapting our technique in this paper would just involve the Gauss map and be an elementary dynamical proof.

\subsection*{Acknowledgments}  The authors would like to thank Florin Boca, Kevin Ford, and Dmitry Kleinbock for their helpful comments.

\end{document}